\newtheorem{theorem}{Theorem}[section]
\newtheorem{lemma}[theorem]{Lemma}
\newtheorem{proposition}[theorem]{Proposition}
\theoremstyle{definition}
\newtheorem{definition}[theorem]{Definition}
\theoremstyle{remark}
\numberwithin{equation}{subsection}
\newtheorem{claim}[theorem]{claim}
\newcommand{\suppu}{\mathrm{supp}(\mu)}
\newcommand{\Wcs}{\mathcal{W}^{cs}}
\newcommand{\Wcu}{\mathcal{W}^{cu}}
\newcommand{\Wc}{\mathcal{W}^c}
\begin{document}

\title{Non-robustly intermingled basins for perturbations of time-one map of Anosov flows}

\author{Qianying Xiao\footnote{ Q. Xiao is the Corresponding author.} and Zuohuan Zheng }
\maketitle

{\footnotesize
% please put the address of the first author
 \centerline{Academy of Mathematics and Systems Science, Chinese Academy of Sciences}
   \centerline{}
   \centerline{Beijing, 100190, P. R. China}
} % Do not forget to end the {\footnotesize by the sign }

\bigskip

%The abstract of your paper
\begin{abstract}
Perturbations of time-one maps of transitive Anosov flows are studied. We show that most perturbations have no intermingled basins of hyperbolic physical measures.
\end{abstract}

%The title of your section 1
\section{Introduction}
Let $M^d$ be a $C^\infty$ compact Riemannian manifold, $ f \in \mathrm{Diff}^2(M) $. To describe the behavior of almost all orbits is a goal of dynamical systems. It is meaningful to emphasize invariant measures which are physically observable. An invariant measure $\mu$ is a \emph{physical measure or SRB measure} if
\[ B(\mu)=\{ x \in M | \lim_{n \rightarrow +\infty } \frac{1}{n} \sum _{k=0}^{n-1} \delta_{f^k(x)} = \mu \}\]
has positive Lebesgue measure. J. Palis~\cite{Pa00,Pa05} conjectured that the existence and finiteness of physical measures whose basins cover Lebesgue almost every point hold for a generic map.

On the annulus $S^1\times [0,1]$, I. Kan~\cite{Kan} constructed a $C^2$ partially hyperbolic endomorphism
$$F:S^1\times [0,1]\rightarrow S^1\times [0,1]$$
such that $F(x,t)=(kx \mod 0,f_x(t)) $ with $k\geq 3$. $F$ is required to preserve the two boundaries $S^1\times \{0\}$ and $S^1\times \{1\}$, whose Lebesgue measures $m_1$ and $m_2$ are ergodic with negative center exponent and are physical measures consequently. Then one has $\mathrm{Leb}(B(m_1)\cup B(m_2))=1$. Even more interesting is that the Lebesgue density points of $B(m_i)$ are dense in $S^1\times [0,1]$ for $i=1,2$. Therefore $m_1$ and $m_2$ have \emph{intermingled basins}, by which we mean the closures of Lebesgue density points of $B(m_1)$ and $B(m_2)$ coincide, yet $B(m_1)$ and $B(m_2)$ are disjoint.

Uniformly hyperbolic systems have no intermingled basins. According to R. Bowen~\cite{Bow}, any physical measure is supported on an Axiom A attractor. The basin of the attractor coincides with the open attracting basin of the attractor modulo a zero Lebesgue measurable set.

One can easily see that the intermingled phenomenon discovered by Kan is $C^2$ robust. Y. Ilyashenko, V. Kleptsyn and P. Saltykov~\cite{Ily} show intermingled basins can even be $C^1$ robust for boundary preserving partially hyperbolic endormorphisms on the annulus.

Kan's example is adapted by R. Ures and C. V\'asquez in~\cite{Raul}, so that on certain boundaryless manifolds there exists partially hyperbolic diffeomorphisms with intermingled basins of physical measures.

In the same paper, Ures and V\'asquez show that for partially hyperbolic diffeomorphisms that are dynamically coherent on $\mathbb{T}^3$, intermingled basins of hyperbolic physical measures is not robust. They are reduced to the situation when every center leaf is compact. By studying the intersection of a u-saturated set with the center leaves, they relate intermingled basins to the existence of tori tangent to $ E^s\oplus E^u $.

One would ask if non-robustness of intermingled basins holds in other settings. In this work, we study perturbations of time-one map of transitive Anosov flows. We give a positive answer to the above question.

Let $M$ support a transitive Anosov flow $ \phi_t $ from now on. The time-one map $\phi_1$ is partially hyperbolic with one-dimensional center. That is to say there is a continuous splitting $TM=E^s\oplus E^c \oplus E^u$, $\dim E^c=1$, $E^s$ is uniformly contracted, $E^u$ is uniformly expanded, and the contraction and expansion of $E^c$ is dominated by $E^s$ and $E^u$ respectively.

According to Hirch-Pugh-Shub~\cite{HPS77}, there is a $C^1$ neighborhood $\mathcal{U}$ of $\phi_1$ in $\mathrm{Diff}^2(M)$ such that any $f\in\mathcal{U}$ is partially hyperbolic. There exist a foliation tangent to the center bundle. Moreover any $f\in\mathcal{U}$ is leafwise conjugate to $\phi_1$.

\begin{theorem}\label{theorem a}
There exists a $C^1$-open and $C^2$-dense subset $\mathcal{V} \subset \mathcal{U}$ such that for any $f \in \mathcal{V}$, any hyperbolic physical measures $\mu$ and $\nu$ of $f$ with disjoint supports do not have intermingled basins.

\end{theorem}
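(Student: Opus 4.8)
The plan is to follow the strategy that Ures and Vásquez used on $\mathbb{T}^3$, adapting it to the Anosov-flow setting where the center foliation comes from the flow direction. Since every $f\in\mathcal{U}$ is leafwise conjugate to $\phi_1$, the center foliation $\Wc$ of $f$ corresponds to the orbit foliation of the Anosov flow, and its leaves are one-dimensional. The key structural simplification available here is that, unlike the torus case, the center leaves need not be compact, but the leafwise conjugacy to a transitive Anosov flow gives strong recurrence: the center-stable and center-unstable foliations $\Wcs$ and $\Wcu$ are minimal or at least have dense leaves coming from transitivity. My first step would be to argue by contradiction: suppose $f$ has hyperbolic physical measures $\mu,\nu$ with disjoint supports and intermingled basins. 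Hyperbolicity of a physical measure for a partially hyperbolic system with one-dimensional center means the center Lyapunov exponent is nonzero $\mu$-almost everywhere; assume it is negative (the attracting case), so that $B(\mu)$ is essentially $u$-saturated, i.e.\ saturated by unstable leaves up to a zero-measure set, by the usual absolute-continuity and Pesin-theory argument.

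The heart of the argument would be to exploit saturation. I would show that the set of Lebesgue density points of $B(\mu)$ is, up to measure zero, saturated by both $\Wu$ and $\Wcs$ (the stable/center-stable direction being handled via the negative center exponent), and symmetrically for $B(\nu)$. The intermingled hypothesis forces the closures of these density-point sets to coincide and equal some common closed invariant set. Using transitivity of the underlying Anosov flow together with the leafwise conjugacy, I would try to propagate the $u$-saturation of $B(\mu)$ along center leaves: because center leaves are dense and the holonomy is controlled, a positive-measure $u$-saturated set whose closure is everything should force $B(\mu)$ itself to have full measure, contradicting the coexistence of $B(\nu)$ with positive measure. Concretely, I expect to locate an invariant set tangent to $E^s\oplus E^u$ — the analogue of the Ures–Vásquez tori — and to show that the existence of intermingled basins forces such a set to separate the supports of $\mu$ and $\nu$, whereas transitivity of $\phi_t$ (hence of $f$ up to leaf conjugacy) obstructs any $f$-invariant compact set tangent to $E^s\oplus E^u$ that could act as such a barrier.

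The construction of the open and dense set $\mathcal{V}$ is where I would insert a genericity argument. The natural candidate is to let $\mathcal{V}$ consist of those $f\in\mathcal{U}$ for which no $f$-invariant set tangent to $E^s\oplus E^u$ exists, or for which the relevant periodic center leaves have appropriate transverse behavior (e.g.\ the periodic orbits of the flow have center holonomy that is either uniformly contracting or expanding, ruling out the neutral case that permits intermingling). Openness would come from the uniform transversality/domination estimates of partial hyperbolicity, which persist under $C^1$ perturbation; $C^2$-density would come from a perturbation of $f$ near periodic center leaves—using that one can $C^2$-perturb the center dynamics along a closed orbit of the flow to destroy any invariant $su$-torus while keeping $f$ in $\mathcal{U}$, invoking an Anosov-flow analogue of the Kan-type center-behavior that Ures and Vásquez controlled via the center leaf's first-return map.

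The main obstacle I anticipate is twofold. First, the noncompactness of center leaves: the Ures–Vásquez argument relied crucially on every center leaf being compact so that a first-return map and its fixed-point/rotation structure could be analyzed, and in the Anosov-flow setting a generic center leaf is a non-closed orbit, so the clean reduction to a circle return map is unavailable. I would need to replace this with an argument using the periodic orbits of $\phi_t$ (which are dense by transitivity) as the locus where the obstructing invariant set must concentrate, and then show that intermingling is incompatible with the density of periodic orbits carrying definite (nonzero) center behavior. Second, establishing that the closure of the density-point set of a $u$-saturated positive-measure set is genuinely $f$-invariant and tangent to $E^s\oplus E^u$ requires an accessibility-type or minimality-type input; I would expect to need that $f$ is accessible or that $\Wu$ and $\Wcs$ together generate a dense set of paths, which for perturbations of transitive Anosov time-one maps should follow from transitivity but must be verified carefully as part of defining $\mathcal{V}$.
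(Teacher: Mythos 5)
Your overall skeleton is right at the top level: argue by contradiction, use the mostly-expanding (ABV) result to reduce to the case where both measures have negative center exponent, extract from the intermingling a compact invariant set tangent to $E^s\oplus E^u$, and rule such a set out by a property that is $C^1$-open and $C^2$-dense. But there are two genuine gaps that would sink the argument as proposed.

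First, the obstruction you invoke is wrong. You assert that ``transitivity of $\phi_t$ (hence of $f$ up to leaf conjugacy) obstructs any $f$-invariant compact set tangent to $E^s\oplus E^u$.'' It does not: the time-one map of a transitive Anosov flow can perfectly well admit compact $su$-saturated invariant sets (a constant-roof suspension of an Anosov diffeomorphism is transitive, yet the fibers are tangent to $E^s\oplus E^u$). The correct obstruction is \emph{accessibility}, and this is precisely where the open-dense set $\mathcal{V}$ comes from in the paper: by Burns--Rodriguez Hertz--Rodriguez Hertz--Talitskaya--Ures, accessibility is $C^1$-open and $C^r$-dense among partially hyperbolic diffeomorphisms with one-dimensional center, so $\mathcal{V}$ is simply the accessible maps in $\mathcal{U}$. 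You mention accessibility only as an afterthought ``to be verified,'' and your alternative candidates for $\mathcal{V}$ (absence of invariant $su$-sets, or conditions on periodic center leaves) come with no openness or density proof; the $C^2$-perturbation near closed orbits you sketch is not justified and is not needed once accessibility is used.

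Second, the core mechanism that converts intermingling into an $su$-lamination is missing, and the substitute you propose is false. The paper takes $K=\mathrm{supp}(\nu)$, which by intermingling and disjointness of supports is a compact, invariant, $u$-saturated set contained in $\overline{B(\mu)}\setminus\mathrm{supp}(\mu)$, and proves (Proposition \ref{proposition b}) that $K$ meets every center leaf in exactly $k$ orbits and is a $C^1$ lamination tangent to $E^s\oplus E^u$. The counting is done on the fundamental-domain segments $[x,f(x)]_c$ (which generate each center leaf under $f$), not on periodic orbits; the key lemma bounds the number of points of $K$ in a short center segment by two, using Pesin stable manifolds of $\mu$-generic points and the fact that $K$ is uniformly separated from $\mathrm{supp}(\mu)$ (a third point would be forced to have $\omega$-limit in $\mathrm{supp}(\mu)\cap K=\emptyset$). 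Your proposed route --- that a positive-measure $u$-saturated set with dense closure ``should force $B(\mu)$ to have full measure'' --- is contradicted by Kan-type examples, where two basins of positive measure are each dense; density of the closure gives no measure-theoretic fullness. Without the counting lemma and the subsequent $C^0$-lamination, $s$-saturation, and $C^1$-regularity steps, the contradiction with accessibility cannot be reached.
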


A partially hyperbolic diffeomorphism is accessible if any two points can be connected by piecewise differentiable curve with finite legs, each of which is tangent to the stable bundle $E^s$ or unstable bundle $E^u$. K. Burns, F. R. Hertz, J. R. Hertz, A. Talitskaya and R. Ures~\cite{Bur} prove that for partially hyperbolic diffeomorphisms with one-dimensional center, the collection having accessibility property is $C^r$ open and dense for $r\geq 1$.

By requiring $\mathcal{U}$ sufficiently small, the dynamics of $f\in \mathcal{U}$ at every center leaf is close to a translation by $1$ along the positive orientation. Therefore, on each center leaf $\Wc(x)$ the segment $[x,f(x)]_c$ connecting $x$ to $f(x)$ is generating in the following sense:
\[ \Wc(x)=\cup_{n\in\mathbb{Z}}f^n( [x,f(x)]_c ) .\]

The ideas of our arguments are inspired by~\cite{Raul}. We study the intersection of a u-saturated set with every center segment $[x,f(x)]_c$. Then we show intermingled basins of hyperbolic physical measures leads to a $C^1$-lamination tangent to $E^s\oplus E^u$, which contradicts accessibility. About lamination we will explain later.

We deal with hyperbolic physical measures in this paper. We do not know what happens to physical measures with zero center exponent. Moreover the supports of two physical measures is not necessarily disjoint. With the absence of dominated splitting, I. Melbourene and A. Windsor~\cite{MW} constructs on $\mathbb{T}^2$ a $C^\infty$ minimal diffeomorphism admitting any given number of absolutely continuous measures. To what extent partial hyperbolicity obstructs such intermingled supports is a problem. Moreover, the problem of intermingled basins of physical measures whose supports intersect is not considered in this paper.

This work is organized as following: Section \ref{pre} is the preliminaries. We will introduce the related properties of Gibbs u-states and physical measures in this section. In section \ref{the} we state our Proposition \ref{proposition b} from which Theorem \ref{theorem a} is deduced. Proof of Proposition \ref{proposition b} is in Section \ref{pro}.

\section{Preliminaries}\label{pre}

Let $f$ be a general partially hyperbolic diffeomorphism with one-dimensional center. The unstable foliation is denoted by $\mathcal{W}^{uu}$.

\begin{definition}
An invariant measure $\mu$ of $f$ is a \emph{Gibbs u-state} if the conditional measures of $\mu$ along $\mathcal{W}^{uu}$ are absolutely continuous with respect to the Lebesgue measures.
\end{definition}

Let us list some related facts about Gibbs u-states.
\begin{lemma}~\cite[Section 11.2]{BDV}\label{list}
\begin{enumerate}
\item A physical measure is a Gibbs u-state;
\item An ergodic Gibbs u-state with negative center exponent is a physical measure;
\item The ergodic components of a Gibbs u-state are Gibbs u-states;
\item The support of a Gibbs u-state is u-saturated.

\end{enumerate}
\end{lemma}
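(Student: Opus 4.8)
The common engine behind all four items is the standard construction and characterization of Gibbs u-states by iterating leaf-volumes, so I would set up that machinery first. Write $m_D$ for the normalized Riemannian volume on a small disk $D$ inside an unstable leaf. Two analytic inputs drive everything: \emph{bounded distortion} along $\mathcal{W}^{uu}$, which makes the densities of $f^n_* m_D$ along unstable plaques uniformly bounded above and below in logarithm; and the consequence that every weak-$*$ accumulation point of the averages $\frac1n\sum_{k=0}^{n-1} f^k_* m_D$ is a Gibbs u-state, with the set of all Gibbs u-states convex and weak-$*$ compact. I would also record the \emph{canonical u-Gibbs density}: along each leaf the conditional density of any Gibbs u-state is forced by invariance to be proportional to the everywhere-positive continuous function obtained from the convergent product of backward unstable Jacobian ratios. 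Item (1) then follows quickly. If $\mu$ is physical, then $\mathrm{Leb}(B(\mu))>0$, and by absolute continuity of $\mathcal{W}^{uu}$ (Fubini along the unstable foliation) some unstable disk $D$ meets $B(\mu)$ in a set $E$ of positive leaf-Lebesgue measure. Normalizing leaf-volume on $E$ to a probability $m$, every $x\in E$ has $\frac1n\sum_{k}\delta_{f^k x}\to\mu$, so bounded convergence gives $\frac1n\sum_k f^k_* m\to\mu$ weakly. Since $m\ll m_D$, these are still averages of iterated leaf-volumes, so by bounded distortion the limit $\mu$ has absolutely continuous conditionals along $\mathcal{W}^{uu}$, i.e.\ $\mu$ is a Gibbs u-state.

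For (2), a negative center exponent means that for the ergodic Gibbs u-state $\mu$ every Lyapunov exponent off $E^u$ is negative, so at $\mu$-a.e.\ point there is a Pesin stable manifold tangent to $E^s\oplus E^c$, giving a lamination $\Wcs$. Let $G$ be the full-$\mu$-measure set of $\mu$-generic points; since $\mu$ has absolutely continuous conditionals, $G$ meets some unstable leaf in a set of positive leaf-measure. Two points on the same $\Wcs$-leaf have forward orbits that converge, so $\Wcs(G)\subset B(\mu)$. I would then use the absolute continuity of the Pesin stable lamination together with a local product structure (the $\Wcs$-leaves are tangent to $E^s\oplus E^c$ and complementary to $E^u$) to conclude, by Fubini in the unstable direction, that $\Wcs(G)$ has positive Lebesgue measure. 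Hence $\mathrm{Leb}(B(\mu))>0$ and $\mu$ is physical.

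For (3) and (4) I would invoke a Hopf-type argument. Approximating the indicator $\mathbf 1_A$ of an invariant set $A$ by continuous functions and comparing backward Birkhoff averages, invariant functions are $\mu$-a.e.\ constant along strong-unstable leaves; because the conditionals are absolutely continuous, this forces $\mu$-a.e.\ unstable plaque to lie entirely inside a single ergodic component. Consequently the absolutely continuous conditionals of $\mu$ are, up to normalization, the conditionals of its ergodic components, which are therefore Gibbs u-states, proving (3). For (4), $\suppu$ is closed and $f$-invariant; using that the conditional density is the everywhere-positive canonical u-Gibbs density and that each unstable leaf is the increasing union of forward images $f^n(P_n)$ of small plaques around $f^{-n}(x)$ (leaves being uniformly expanded), positivity of the density propagates along the whole leaf. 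Thus for a.e.\ relevant plaque the support fills the entire plaque, and a closure argument shows $\suppu$ is u-saturated.

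The main obstacle is not item (1), which is essentially the bounded-distortion computation, but items (2) and (3). In (2) the delicate input is the absolute continuity of the Pesin (non-uniformly hyperbolic) stable lamination and the measurable local product structure needed to upgrade ``positive leaf-measure of generic points'' to ``positive Lebesgue measure of the basin.'' In (3) the subtlety is making the disintegration along $\mathcal{W}^{uu}$ compatible with the ergodic decomposition; the Hopf argument is precisely what excludes ergodic components whose conditionals are singular fractal pieces inside a plaque. I would therefore concentrate the effort on the leafwise constancy of invariant functions and on the Pesin absolute continuity, treating the remaining steps as standard.
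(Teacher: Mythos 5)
The paper does not prove this lemma: it is imported wholesale from~\cite[Section 11.2]{BDV}, so there is no internal argument to compare against. Your outline reconstructs exactly the standard proofs of that reference --- averaged push-forwards of leaf volume plus bounded distortion for (1), Pesin stable manifolds tangent to $E^s\oplus E^c$ and absolute continuity of their holonomy for (2), the Hopf-type constancy of invariant functions along unstable plaques combined with the disintegration for (3), and positivity of the canonical conditional densities propagated by expansion for (4) --- and it is correct as a sketch, with the delicate points (passing absolute continuity to weak-$*$ limits via the uniform upper density bound, and Pesin absolute continuity) correctly identified as the ones needing care.
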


\begin{definition}
A point $x\in M^d$ is \emph{regular in the sense of Birkhoff} if
\[\lim_{n\rightarrow+\infty}\frac{1}{n}\sum^{n-1}_{k=0}\varphi(f^k(x))=\lim_{n\rightarrow -\infty}\frac{1}{-n}\sum^{n+1}_{k=0}\varphi(f^k(x)).\]
A point $x\in M$ is \emph{regular in the sense of Lyapunov} if there exists a splitting $T_xM=E_1(x)\oplus \cdots \oplus E_{k(x)}$, $k(x)\leq d$, Lyapunov exponents $\lambda_1(x)<\cdots<\lambda_k(x)$, such that for any $v\in E_i(x)\setminus\{0\}$, $i=1,\cdots,k(x)$,
\[ \lim_{n\rightarrow\pm\infty}\frac{1}{n}\ln\|Df^n(v)\|=\lambda_i(x).\]
\end{definition}

\begin{definition}
For any invariant measure $\mu$ of $f$, the integrated center exponent $\lambda^c(\mu)$ is defined by
\[\lambda^c(\mu)=\int\lambda^c(x)d\mu(x),  \]
with $\lambda^c(x)$ denotes the Lyapunov exponents along the one-dimensional center bundle.
\end{definition}

\begin{lemma}\label{main lemma}
If $\mu$ is a physical measure such that $\lambda^c(\mu)\neq 0$, then $\mu$ is ergodic.
\end{lemma}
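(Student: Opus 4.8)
The plan is to reduce ergodicity to a statement about the forward center exponent on the basin, and then to invoke nonuniform hyperbolicity. The first observation is that, because $E^c$ is one-dimensional and $Df$-invariant, the function $\varphi(x)=\log\|Df|_{E^c(x)}\|$ is continuous and satisfies $\log\|Df^n|_{E^c(x)}\|=\sum_{k=0}^{n-1}\varphi(f^k(x))$. Consequently, for every $x\in B(\mu)$ the defining convergence $\frac1n\sum_{k=0}^{n-1}\delta_{f^k(x)}\to\mu$ yields
\[
\lim_{n\to+\infty}\frac1n\log\|Df^n|_{E^c(x)}\|=\int\varphi\,d\mu=\lambda^c(\mu).
\]
Thus the forward center exponent is constant, equal to $\lambda^c(\mu)\neq0$, on the positive-Lebesgue-measure set $B(\mu)$. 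I will carry out the argument assuming $\lambda^c(\mu)<0$; the case $\lambda^c(\mu)>0$ is symmetric after replacing $f$ by $f^{-1}$ (so that the stable direction $E^s\oplus E^c$ is replaced by the unstable direction $E^c\oplus E^u$), using the time-reversed analogues of Lemma \ref{list}.

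Next I would unwind the ergodic decomposition. By Lemma \ref{list}(1) the measure $\mu$ is a Gibbs u-state, and by Lemma \ref{list}(3) every ergodic component is again a Gibbs u-state. Writing $\mu=\int\mu_\xi\,d\tau(\xi)$, any component with negative center exponent is, by Lemma \ref{list}(2), an ergodic physical measure. Distinct ergodic physical measures have pairwise disjoint basins, each of positive Lebesgue measure, so on the finite-volume manifold $M$ there are at most countably many of them; moreover $B(\mu)$ is disjoint from the basin of any ergodic component distinct from $\mu$, since a point cannot be generic for two different measures.

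The heart of the proof is to show that the decomposition is trivial, i.e. that $B(\mu)$ coincides mod zero with the basin of a single ergodic component. Here I would use that $\lambda^c(\mu)<0$ makes $\mu$ a hyperbolic measure, with (nonuniform) stable direction $E^s\oplus E^c$ of dimension $\dim E^s+1$ and unstable direction $E^u$. The mechanism is a Hopf-type argument: the absolute continuity of the strong unstable foliation (the defining property of a Gibbs u-state) together with the absolute continuity of the Pesin center-stable lamination forces the center-contracting region to be partitioned, mod zero, into the basins of the countably many ergodic physical Gibbs u-states. Since every point of $B(\mu)$ is center-contracting and $B(\mu)$ avoids the basins of all other ergodic components, the positive-measure set $B(\mu)$ must be contained mod zero in the basin of one ergodic component $\mu_{\xi_0}$; as $B(\mu)\subset B(\mu_{\xi_0})$ has positive measure, $\mu=\mu_{\xi_0}$ and $\mu$ is ergodic. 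Equivalently, one may invoke the Bonatti--Viana structure theory of Gibbs u-states with mostly contracting center, which directly provides the finiteness of ergodic physical measures and the covering of the center-contracting region by their basins.

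I expect this last step to be the main obstacle. Controlling the basin of a possibly non-ergodic physical measure requires genuine nonuniform hyperbolicity -- Pesin blocks, measurable stable manifolds, and the absolute continuity of their holonomies -- and it is exactly here that the hypothesis $\lambda^c(\mu)\neq0$ is indispensable: without it one cannot exclude a positive-Lebesgue-measure set of orbits equidistributing to a nontrivial convex combination of distinct ergodic measures, which is precisely the Kan-type phenomenon. A secondary technical point to verify carefully is the $f^{-1}$ reduction for the case $\lambda^c(\mu)>0$, where one should confirm that $\mu$ is a hyperbolic SRB measure so that the time-reversed version of the above argument applies.
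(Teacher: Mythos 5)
Your treatment of the case $\lambda^c(\mu)<0$ is essentially the paper's argument: the paper also picks an ergodic component $\mu_1$ with negative center exponent (which is physical by Lemma \ref{list}), builds a set $R=\cup_{x\in D\cap\Lambda_N}W^s_{\delta/2}(x)$ of Pesin stable disks over a positive-Lebesgue-measure subset of an unstable disk, and uses the Gibbs u-state property plus bounded distortion along $\mathcal{W}^{uu}$ to force $B(\mu)\cap R\neq\emptyset$, hence $\mu=\mu_1$. Your higher-level appeal to the Bonatti--Viana mostly-contracting structure theory is an acceptable packaging of the same Hopf-type mechanism.

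The genuine gap is the case $\lambda^c(\mu)>0$, which you dispose of by ``replacing $f$ by $f^{-1}$.'' This reduction does not work: being a physical measure is not a time-symmetric property. The basin is defined by \emph{forward} Birkhoff averages, so $\mu$ physical for $f$ gives no positive-Lebesgue-measure set of points whose backward averages converge to $\mu$, i.e.\ $\mu$ need not be physical for $f^{-1}$. Likewise, Lemma \ref{list}(1) makes $\mu$ a Gibbs u-state for $f$ (absolutely continuous conditionals along $\mathcal{W}^{uu}$), whereas the time-reversed argument would need absolute continuity along $\mathcal{W}^{ss}$, which is false in general (already for Anosov diffeomorphisms an SRB measure for $f$ is typically not SRB for $f^{-1}$). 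You flag this as a ``secondary technical point,'' but it is where the argument breaks. The paper's fix stays in forward time: for every $x\in B(\mu)$ one has $\lim_n\frac1n\sum_{k=0}^{n-1}\ln\|Df^{-1}|E^c(f^k(x))\|=\int\ln\|Df^{-1}|E^c\|\,d\mu<0$, so $E^c\oplus E^u$ is mostly expanding on $B(\mu)$ in the sense of Alves--Bonatti--Viana; their Lemma 4.5 then produces an ergodic physical measure $\nu$ whose basin is open modulo a Lebesgue-null set and meets $B(\mu)$, forcing $\mu=\nu$. You would need to substitute this (or an equivalent forward-time argument) for the claimed $f\mapsto f^{-1}$ symmetry.
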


\begin{proof}
 The proof is divided into two cases.
 \begin{enumerate}
 \item[Case 1]: $\lambda^c(\mu)<0$.

 Since $\mu$ is a Gibbs u-state, so is each of its ergodic component by Lemma \ref{list}. The ergodic component with negative center exponent is a physical measure. Hence $\mu$ has at most countably many ergodic components with negative center exponent. Let $\mu_1$ be such an component, the $\mu$ gives positive weight to $\mu_1$.

 According to Pesin theory, there exists $\Lambda\subset M\cap B(\mu_1)$ such that any $x\in \Lambda$ is regular in the sense of both Birkhoff and Lyapunov, the Pesin stable manifold $W^s(x)$ is well-defined, and $\mu_1(\Lambda)=1$. There are compact subsets $\Lambda_n$ such that $\Lambda_n\subset \Lambda_{n+1}$, $\Lambda=\cup_{n\geq1}\Lambda_n$, and the size of Pesin stable manifold of $x\in \Lambda_n$ is uniformly bounded away from 0.

 Take $\Lambda_N$ such that $\mu_1(\Lambda_N)>0$. There exists $\delta>0$ such that any $x\in \Lambda_N$, the size of $W^s$ is greater than $\delta$. Without loss of generality, we can assume each point of $\Lambda_N$ is in the support of $\mu_1|\Lambda_N$.

 Let $D$ be a disk in an unstable leaf such that $\mathrm{Leb}_D(D\cap\Lambda_N)>0$. Define
 \[R=\cup_{x\in D\cap\Lambda_N}W^s_{\delta/2}(x).\]
 Being in the support of $\mu_1|\Lambda_N$, $x$ is in the support of $\mu$. Together with the assumption that $\mu$ is a Gibbs u-state, one has $\mu(R)>0$.
 Take $y\in B(\mu)$ such that $y$ is a Lebesgue density point of $B(\mu)\cap\mathcal{W}^{uu}(y)$. The positive orbit of $y$ goes to arbitrarily close to $R$. By some bounded distortion arguments, one can easily show $B(\mu)\cap R\neq\emptyset$. Since $R\subset B(\mu_1)$, $\mu=\mu_1$.

 \item[Case 2]: $\lambda^c(\mu) >0$.

 Any $x\in B(\mu)$,
 \[\lim_{n\rightarrow+\infty}\frac{1}{n}\sum^{n-1}_{k=0}\ln \|Df^{-1}|E^c(f^k(x))\|=\int\ln \|Df^{-1}|E^c(x)\|d\mu(x)<0.\]
 Therefore $E^c\oplus E^u$ is mostly expanding in the sense defined by Alves-Bonatti-Viana~\cite{ABV}. Take a cu-disk D transverse to $\mathcal{W}^{ss}$ such that $\mathrm{Leb}_D(D\cap B(\mu))>0$. By ~\cite[Lemma 4.5]{ABV}, there exists physical measure $\nu$ whose basin is open and $D\cap B(\mu) \cap B(\nu)\neq\emptyset$. Consequently $\mu=\nu$.
 \end{enumerate}
Combining the two cases, we have shown that a physical measure of $f$ with nonzero integrated center exponent is ergodic.
\end{proof}

According to Lemma \ref{main lemma}, when we say a physical measure with negative (positive) center exponent, the meaning is clear, because the physical measure is ergodic, the center Lyapunov exponent equals the integrated center exponent almost everywhere.

\section{Proof of Theorem \ref{theorem a}}\label{the}

Let $\mathcal{U}$ be as in the Introduction. For any $f\in \mathcal{U}$, $E^s$ and $E^u$ is uniquely integrable, the integrated foliations are the stable foliation $\mathcal{W}^{ss}$ and the unstable foliation $\mathcal{W}^{uu}$. Moreover, $f$ is \emph{dynamically coherent}: there are foliations $\mathcal{W}^{cs}$ and $\mathcal{W}^{cu}$ tangent to $E^s\oplus E^c$ and $E^u\oplus E^c$, the intersection of $\mathcal{W}^{cs}$ and $\mathcal{W}^{cu}$ is a foliation $\mathcal{W}^c$.

A subset $K\subset M$ is \emph{u-saturated (s-saturated)} if it is the union of complete strong unstable leaves (stable leaves). \emph{c-saturated} set is defined similarly.

We will study certain u-saturated invariant compact set $K$ related to a physical measure in the next section. We will prove such kind of $K$ is laminated by $C^1$ leaves tangent to $E^s\oplus E^u$.

\begin{definition}~\cite{HPS77,Wil}
A \emph{lamination} $\mathcal{L}$ of a compact set $K\subset M$ is a family of disjoint submanifolds (leaves of the lamination) whose union is $K$ and which are assembeld in a $C^1$ continuous fashion. That is, $K$ is covered by lamination boxes, where a lamination box is a map $\varphi:D^c\times Y\rightarrow K$, $Y$ is a fixed compact set, $\varphi$ is a homeomorphism to a relatively open subset of $K$, and $\partial\varphi(x,y)/\partial x$ is nondegenerate and continuous with respect to $(x,y)\in D^c \times Y$.
\end{definition}

\begin{proposition}\label{proposition b}
For any $f\in \mathcal{U}$, any physical measure $\mu$ of $f$ with negative center Lyapunov exponent, if $K$ is a compact invariant u-saturated subset such that $K\subset \overline{B(\mu)}\setminus \suppu $, then
\begin{itemize}
\item there exists $k\in \mathbb{N}$ such that $K$ intersects each center leaf in exactly $k$ orbits,
\item $K$ is laminated by $C^1$ leaves tangent to $E^s\oplus E^u$. In particular, $f$ is not accessible.

\end{itemize}

\end{proposition}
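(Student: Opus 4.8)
The plan is to push everything down to the one–dimensional center dynamics and then read off the transverse $E^s\oplus E^u$ structure from invariance together with the sign of the center exponent. Since $f\in\mathcal U$ is dynamically coherent and leafwise conjugate to $\phi_1$, it preserves every center leaf and acts on $\Wc(x)$ as a homeomorphism conjugate to the unit translation, with $[x,f(x)]_c$ a fundamental domain; thus $\Wc(x)/f$ is a circle and $K\cap\Wc(x)$ descends to a closed $f$–invariant subset of it, whose points are exactly the $f$–orbits of $K$ lying on that leaf. One preliminary observation drives the whole argument: because $\lambda^c(\mu)<0$, the center bundle is contracted along the orbit of any $y\in B(\mu)$, so the whole center–stable leaf $\Wcs(y)$ is forward contracted and hence $\Wcs(y)\subset B(\mu)$; taking closures and using continuity of $\Wcs$, the set $\overline{B(\mu)}$ is $cs$–saturated. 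Consequently $K$ is u–saturated while sitting inside the $cs$–saturated set $\overline{B(\mu)}$, and both saturations can be exploited simultaneously.

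For the first bullet I would first prove that $K\cap[x,f(x)]_c$ is finite and then that its cardinality is locally constant, hence globally a single $k$ by connectedness of the leaf space (and transitivity of $f$). Local constancy is essentially built in: moving the base point along a strong unstable leaf carries $K\cap\Wc(x)$ to $K\cap\Wc(x')$ by unstable holonomy (u–saturation), and moving it along a stable leaf does the same through the $cs$–saturation of $\overline{B(\mu)}$, so the count cannot jump. The finiteness itself is the crux. I would argue by contradiction: an accumulation $p_i\to p^*$ of distinct orbits of $K$ along a center leaf produces, by u–saturation, distinct strong unstable leaves of $K$ accumulating inside a single center–unstable leaf; the complementary center gaps then have center–length tending to $0$, and iterating with the non–uniform center contraction furnished by $\mu$ together with bounded distortion along $\mathcal W^{uu}$ forces Lebesgue density points of $B(\mu)$ into these gaps in a way incompatible with $K\subset\overline{B(\mu)}\setminus\suppu$ (ultimately one is pushed to conclude that a full center–unstable plaque lies in $K$, whence by density of weak unstable leaves of the transitive flow $K=M$, contradicting $K\cap\suppu=\emptyset$).

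Granting finiteness, the lamination is obtained by letting the $k$ intersection points vary with the center leaf. In the unstable direction they move along strong unstable leaves by u–saturation, supplying the $E^u$ summand. In the stable direction, fix a center–stable leaf and let $\gamma$ be the curve it traces inside that leaf as the center leaf varies over $\mathcal W^{ss}$; $\gamma$ is transverse to $E^c$ and, being canonically attached to $K\cap\Wc(\cdot)$, is $f$–invariant. An $f$–invariant line field inside $E^s\oplus E^c$ transverse to the center direction $E^c$ (which dominates $E^s$) must coincide with $E^s$, since forward iteration would otherwise drive any such line field to $E^c$; hence $\gamma$ is tangent to $E^s$, giving the $E^s$ summand and showing that each local leaf is tangent to $E^s\oplus E^u$. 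Continuity of the $k$ points together with the $C^1$ regularity of the strong stable and unstable holonomies inside $\Wcs$ and $\Wcu$ upgrades this to a $C^1$ lamination in the sense of the definition above. Finally, each leaf is a complete $su$–manifold, saturated by both $\mathcal W^{ss}$ and $\mathcal W^{uu}$, hence an accessibility class contained in $K\subsetneq M$; since $\suppu\cap K=\emptyset$, $f$ cannot be accessible.

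The hardest step is the finiteness in the first bullet: ruling out an a priori Cantor accumulation of unstable leaves of $K$ along a center leaf, and converting the merely non–uniform center contraction of $\mu$ into a genuine contradiction, will require the careful bounded–distortion and density–point analysis sketched above. The $C^1$ (rather than merely continuous) regularity of the resulting lamination is the secondary difficulty, for which the domination $E^s\prec E^c\prec E^u$ and the smoothness of the strong holonomies within center–stable and center–unstable leaves are the essential inputs.
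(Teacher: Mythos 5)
Your outline (finiteness of $K\cap[x,f(x)]_c$, constancy of the count, an $su$-lamination, contradiction with accessibility) matches the paper's, but three load-bearing steps are either unjustified or replaced by hand-waving. The most serious is the opening claim that $\lambda^c(\mu)<0$ makes $\overline{B(\mu)}$ $cs$-saturated because ``the whole center–stable leaf $\Wcs(y)$ is forward contracted.'' Negativity of the forward Birkhoff average of $\ln\|Df|E^c\|$ at $y\in B(\mu)$ controls the derivative along the orbit of $y$ only; it yields a Pesin stable manifold of some (non-uniform, possibly tiny) size inside $\Wcs(y)$, not the whole global leaf. You then lean on this $cs$-saturation to get local constancy of the count in the stable direction, so the gap propagates. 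The paper's substitute is the precise Pesin-theoretic input it quotes from Ures--V\'asquez: $\overline{B(\mu)}\subset\overline{W^s(\Lambda)}$ and, crucially, from any $x\in W^s(\Lambda)$ one can travel along the center \emph{all the way to} $\suppu$ inside a single Pesin stable manifold. Combined with $\mathrm{dist}(K,\suppu)>0$, this gives the key uniform statement (their Lemma 4.4): any center segment of length $<h$ meets $K$ in at most two points, proved by forcing the long segment $[q,p]_c$ to cross one of the ``walls'' $\mathcal{W}^{ss}(\mathcal{W}^{uu}(x))$ or $\mathcal{W}^{ss}(\mathcal{W}^{uu}(z))$ and extracting a point whose $\omega$-limit lies in $\suppu\cap K=\emptyset$. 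Your finiteness argument (accumulating gaps, bounded distortion, ``ultimately one is pushed to conclude $K=M$'') never identifies a concrete contradiction mechanism, and you concede as much; this is exactly the step the paper's crossing argument supplies.

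The tangency step has a similar problem. Arguing that ``an $f$-invariant line field inside $E^s\oplus E^c$ transverse to $E^c$ must be $E^s$'' presupposes that the transversal traced by $K$ inside a center-stable leaf is differentiable, which is not known at that stage (it is only a topological graph), and also requires a uniform angle bound away from $E^c$ that the $C^0$ lamination does not provide. The paper avoids both issues: it writes $\Wcs_{loc}(x)\cap K$ as a graph $g$ over $\mathbb{R}^s$, assumes $Dg(0)\neq 0$, and derives a contradiction by an intermediate-value/crossing argument producing a point $q\in W^s(\Lambda)$ with $\omega(q)\subset\suppu\cap K$; this gives tangency to $E^s$, hence $s$-saturation by unique integrability, and the $C^1$ regularity of the leaves then comes from Hammerlindl's differentiability of $\mathcal{W}^{ss}(\mathcal{W}^{uu}(x))$, not from a domination argument. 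So while your skeleton is right, the proposal as written does not close any of the three hard steps.
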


\begin{definition}
Let $ \mu $, $ \nu $ be two physical measures. $ \mu $, $ \nu $ have \emph{intermingled basins} if for any open set $U$ in $M$, $ \mathrm{Leb}(U \cap B(\mu)) >0 $ implies $ \mathrm{Leb}(U \cap B(\nu)) >0 $ and vice versa.
\end{definition}
It is obvious from the definition that for two physical measures $ \mu $, $ \nu $ with intermingled basins, $\mathrm{supp}(\nu)\subset\overline{B(\mu)}$.

\begin{proof}[Proof of Theorem \ref{theorem a}]
According to ~\cite{Bur}, there is a $C^1$ open $C^2$ dense subset $\mathcal{V}\subset\mathcal{U}$ such that any $f\in\mathcal{V}$ is accessible.

For any $f\in\mathcal{V}$, any hyperbolic physical measures $\mu,\nu$ whose supports are disjoint but basins intermingle, one has $\lambda^c(\mu)<0,\lambda^c(\nu)<0$. Otherwise, by ~\cite{ABV}, $B(\mu)$ or $B(\nu)$ is open modulo a zero Lebesgue measurable set, the basins $B(\mu)$ and $B(\nu)$ do not intermingle.

Since $B(\mu)$ and $B(\nu)$ intermingle, $\mathrm{supp}(\nu)\subset \overline{B(\mu)}$. By assumption, $\suppu\cap \mathrm{supp}(\nu)=\emptyset$, hence $\mathrm{supp}(\nu)\subset \overline{B(\mu)}\setminus \suppu$.

Apply Proposition \ref{proposition b} to $K=\mathrm{supp}(\nu)$ and $\mu$, then $f$ is not accessible, a contradiction.

So any $f\in \mathcal{V}$ has no intermingled basins for hyperbolic physical measures with disjoint supports.
\end{proof}

\section{Proof of Proposition \ref{proposition b}}\label{pro}
Let $f\in \mathcal{U}$. Assume $K$ is a compact invariant u-saturated subset of $f$, and $K\subset \overline{B(\mu)}\setminus \suppu$.

\begin{lemma}\label{begin}
Every center unstable leaf is dense and $K$ intersects each center leaf.
\end{lemma}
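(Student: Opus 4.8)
The plan is to treat the two assertions separately: to derive the density of center-unstable leaves from the transitivity of the underlying Anosov flow, and then to exploit that density, together with the near-translation structure of $f$ along center leaves, to force $K$ to meet every center leaf.

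For the first assertion I would transfer the statement to $\phi_1$ through the leafwise conjugacy $h$ with $h\circ f=\phi_1\circ h$ furnished by \cite{HPS77}. Since strong unstable leaves are characterized dynamically as $\mathcal{W}^{uu}(x)=\{y:d(f^{-n}x,f^{-n}y)\to 0\}$ and $h$ is a uniformly continuous conjugacy, $h$ carries strong unstable leaves of $f$ to strong unstable leaves of $\phi_1$; combined with the fact that $h$ preserves center leaves, $h$ maps each $\Wcu(x)$ homeomorphically onto the corresponding weak-unstable leaf of the flow $\phi_t$. As density is preserved by homeomorphisms, it then suffices to know that the weak-unstable foliation of a transitive Anosov flow is minimal. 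This is classical: the closure of any weak-unstable leaf is a closed, flow-invariant, weak-unstable-saturated set, and the presence of a dense orbit together with the local product structure forces it to be all of $M$. This is the step that leans on external Anosov-flow theory rather than on the hypotheses of the excerpt.

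For the second assertion, fix $z\in K$ (nonempty) and recall $\Wcu(z)$ is dense by the first part. The structural observations I would use are that, inside the single leaf $\Wcu(z)$, the induced product structure writes it as $(\text{center})\times(\text{strong unstable})$; that $K\cap\Wcu(z)$ is $u$-saturated, hence of the form $T\times(\text{strong unstable})$ for a closed set $T$ in the center direction; and crucially that $T$ contains the center-positions of the entire $f$-orbit $\{f^n(z)\}$. Because $f$ is close to a translation by $1$ along center leaves, the generating property $\Wc(z)=\bigcup_n f^n([z,f(z)]_c)$ shows that consecutive orbit points are at center-distance at most a uniform constant $C_0$ and that the orbit is unbounded in both center directions, so $\bigcup_n\mathcal{W}^{uu}(f^n z)\subseteq K$ is $C_0$-dense along the center direction of $\Wcu(z)$. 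Consequently every center leaf contained in $\Wcu(z)$ passes within a uniformly bounded center-distance $C_1$ of $K$.

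To finish, take an arbitrary $y\in M$ and, using density of $\Wcu(z)$, choose $w_k\in\Wcu(z)$ with $w_k\to y$. By the previous paragraph there is $q_k\in K\cap\Wc(w_k)$ with $d_c(w_k,q_k)\le C_1$, a bound independent of $k$. Since $K$ is compact I extract $q_k\to p\in K$; the uniform bound keeps the $q_k$ in a fixed-size center-neighborhood of the $w_k$, so continuity of the center foliation over bounded intrinsic balls gives $p\in\Wc(y)$, whence $\Wc(y)\cap K\neq\emptyset$. The main obstacle is precisely this passage to the limit: density of $\Wcu(z)$ by itself only produces points of $K$ that are \emph{near} $\Wc(y)$, and since the leaves are noncompact these nearby intersection points could a priori escape to infinity along the leaf. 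It is the translation-by-$1$ (generating) structure, giving the uniform gap bound $C_0$ and hence the uniform center-distance $C_1$, that confines the intersection points to a compact center-neighborhood and lets the limit land on $\Wc(y)$ itself; mere uniform density of the foliation would not suffice here, since approximating $y$ more closely generally forces one further out along $\Wcu(z)$.
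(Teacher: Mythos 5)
Your two-part strategy (density of center-unstable leaves first, then forcing $K$ to meet every center leaf) matches the paper's, but both halves contain genuine gaps.

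For the first assertion, your mechanism rests on treating the Hirsch--Pugh--Shub leaf conjugacy as an honest conjugacy $h\circ f=\phi_1\circ h$ and deducing from the dynamical characterization of $\mathcal{W}^{uu}$ that $h$ carries strong unstable leaves to strong unstable leaves. No such $h$ exists for a genuine perturbation: time-one maps of Anosov flows are not structurally stable, and the leaf conjugacy only satisfies $h(f(x))\in\mathcal{W}^c_{\phi}(\phi_1(h(x)))$, i.e.\ it conjugates the induced dynamics on the space of center leaves. So the step ``$h$ preserves strong unstable leaves, hence maps $\Wcu$-leaves to weak-unstable leaves of the flow'' is unjustified as argued. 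What the leaf conjugacy \emph{does} legitimately give is a point $y$ whose center leaf is dense (the image of a dense orbit of the transitive flow), and the paper works intrinsically from there: every $\mathcal{W}^{uu}(x)$ meets $\mathcal{W}^{cs}(y)$, points $x'$ of $\mathcal{W}^{cs}(y)$ have dense center leaves, and $\Wcu(x)\supseteq\Wc(x')$ is therefore dense. You would need either this intrinsic route or a separate argument that the leaf conjugacy can be chosen to respect the $cu$-foliations.

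For the second assertion, the crux of your limiting argument is the uniform constant $C_1$: that \emph{every} center leaf contained in $\Wcu(z)$ meets $K$ within a center-distance bounded independently of the leaf. The bound $C_0$ on the center-gaps between consecutive orbit points $f^n(z)$ lives on the single center leaf $\Wc(z)$; to transport it to an arbitrary center leaf $\Wc(w)\subset\Wcu(z)$ you must push the segment $[f^nz,f^{n+1}z]_c$ across the leaf by the strong-unstable holonomy, and for a perturbation of $\phi_1$ this holonomy is not uniformly bounded over unbounded unstable distances (unlike for $\phi_1$ itself, where it is the flow). You correctly identify that without such a uniform bound the intersection points $q_k$ can escape along the (noncompact) center leaves, so this is not a removable technicality. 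The paper sidesteps the quantitative issue entirely: since $K$ is invariant and $\Wc(x)=\bigcup_n f^n([x,f(x)]_c)$, the $c$-saturation $\Wc(K)$ equals $\bigcup_{x\in K}[x,f(x)]_c$, hence is \emph{compact}; it is $u$- and $c$-saturated, so it contains a dense center-unstable leaf and, being closed, equals $M$, which is exactly the statement that $K$ meets every center leaf. Replacing your uniform-distance argument by this compactness observation would close the gap.
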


\begin{proof}
Take $y\in M$ such that $\mathcal{W}^c(y)$ is dense(such $y$ exists since $f$ is leafwise conjugate to $\phi_1$ and $\phi_t$ is a transitive flow).

Any $x\in M$, $\mathcal{W}^{uu}(x)\cap \mathcal{W}^{cs}(y)\neq\emptyset$, let $x^\prime\in \mathcal{W}^{uu}(x)\cap \mathcal{W}^{cs}(y)\neq\emptyset$, then $\mathcal{W}^c(x^\prime)$ is dense, consequently $\mathcal{W}^{cu}(x)\supseteq \mathcal{W}^c(x^\prime)$ is dense.

Since $K$ is a compact invariant u-saturated subset, $\mathcal{W}^c(K)=\cup_{x\in K}\mathcal{W}^c(x)$ is both u-saturated and s-saturated. Moreover, $\mathcal{W}^c(K)$ is compact. Therefore $\mathcal{W}^c(K)=M$, i.e. $K$ intersects each center leaf.
\end{proof}

Let $\mathcal{K}(M)$ be the collection of nonempty subsets of $M$. Define
\begin{align*}
  \Phi_K: &\  M\rightarrow \mathcal{K}(M) \\
   & \ x\mapsto[x,f(x)]_c\cap K
\end{align*}
with $[x,f(x)]_c$ denote the closed segment on $\mathcal{W}^c(x)$ connecting $x$ and $f(x)$.

Since $\Wc(x)=\cup_{n\in \mathbb{Z}}f^n([x,f(x)]_c)$, $K$ is invariant, and $K\cap \Wc(x)\neq\emptyset$, one has $K\cap [x,f(x)]_c\neq\emptyset$. Therefore $\Phi_K$ is well-defined.

$K$ being compact, $\Phi_K$ is upper semi-continuous, and the collection of continuity points of $\Phi_K$ is a residual set of $M$.

Let $\mu$ be a physical measure with negative center exponent. By Lemma \ref{main lemma}, $\mu$ is ergodic. There exists $\Lambda\subset B(\mu)$ such that $\mu(\Lambda)=1$, $\Lambda$ consists of points that are regular in the sense of both Birkhoff and Lyapunov.
\begin{lemma}~\cite[Lemma 4.1, Lemma 4.3]{Raul}
\begin{description}
  \item[(1)] $\overline{B(\mu)}\subset\overline{W^s(\Lambda)}$, with $W^s(\Lambda)=\cup_{x\in\Lambda}W^s(x)$;
  \item[(2)] For any $x\in W^s(\Lambda)$, there exists $y\in \Wc(x)\cap \suppu$ such that $[x,y]_c\subset W^s(y)$.
\end{description}
\end{lemma}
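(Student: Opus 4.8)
The plan is to treat the two assertions separately, using throughout that $\mu$ is an ergodic hyperbolic physical measure (Lemma \ref{main lemma}) whose contracting directions are $E^s\oplus E^c$, so that for $x\in\Lambda$ the Pesin stable manifold $W^s(x)$ is tangent to $E^s\oplus E^c$; since the center direction $E^c$ is a subbundle of $E^s\oplus E^c$ tangent to $\Wc$, each $W^s(x)$ is locally saturated by center curves. I would also fix once and for all a nested sequence of Pesin blocks $\Lambda_N\subset\Lambda$, compact, with $\mu(\Lambda_N)\to1$ and stable manifolds of uniform size $\delta_N>0$ depending continuously on the base point in $\Lambda_N$; I may assume $\Lambda$ is $f$-invariant and $\Lambda\subset\suppu$, so that $\overline{\Lambda}=\suppu$.

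For the easy half of (1), if $w\in W^s(x)$ with $x\in\Lambda\subset B(\mu)$, then the forward orbits of $w$ and $x$ are asymptotic, so $w$ has the same Birkhoff limit as $x$ and $w\in B(\mu)$; hence $W^s(\Lambda)\subset B(\mu)$. For the stated inclusion $\overline{B(\mu)}\subset\overline{W^s(\Lambda)}$ I would argue that $B(\mu)$ is, up to approximation, swept out by the stable manifolds of $\Lambda$. Given $z\in B(\mu)$, its forward orbit equidistributes to $\mu$, hence enters any neighbourhood of a fixed block $\Lambda_N$ with frequency $\approx\mu(\Lambda_N)>0$; choosing times $n_k\to\infty$ with $f^{n_k}(z)\to p\in\Lambda_N$, I would use the local product structure near $p$ together with the absolute continuity of the Pesin stable lamination over $\Lambda_N$ to produce points $w_k$ whose forward orbits land on stable manifolds of $\Lambda_N$, and which converge to $z$ because the approximation is carried out along directions contracted by $f^{-n_k}$ before pulling back. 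This gives $z\in\overline{W^s(\Lambda)}$. The delicate point is exactly this pull-back: one must place the approximating points so that the error lies in the $f^{-n_k}$-contracting directions while still meeting $W^s(\Lambda)$ in spite of the transversally Cantor structure of $\Lambda_N$, and controlling this with bounded distortion and absolute continuity is the main obstacle in (1).

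For (2), fix $x\in W^s(\Lambda)$, say $x\in W^s(x_0)$ with $x_0\in\Lambda$. The center curve $\gamma_x:=\Wc(x)\cap W^s(x_0)$ passes through $x$, and for any $y\in\gamma_x$ the stable manifolds coincide, $W^s(y)=W^s(x_0)$, so the connected center segment satisfies $[x,y]_c\subset\gamma_x\subset W^s(x_0)=W^s(y)$ automatically. Thus (2) reduces to finding a support point on this center curve, i.e. to showing $\gamma_x\cap\suppu\neq\emptyset$. First I record that $\suppu$ meets every center leaf: $\suppu$ is compact, $f$-invariant and $u$-saturated by Lemma \ref{list}, so the argument of Lemma \ref{begin} applies verbatim with $\suppu$ in place of $K$, yielding $\Wc(\suppu)=M$. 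It then remains to push such an intersection onto the sub-curve $\gamma_x$ lying inside the stable manifold.

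To locate the intersection inside $\gamma_x$ I would iterate forward: since $d(f^n x,f^n x_0)\to0$ and $f^n x_0\in\suppu$, for large $n$ the contracted curve $f^n(\gamma_x)$ lies in the local stable manifold of $f^n x_0$ and accumulates on $\suppu$; using the $u$-saturation and the local product structure of $\suppu$ near $f^n x_0$ I would extract a support point on $f^n(\gamma_x)$ and pull it back by $f^{-n}$ to a point $y\in\gamma_x\cap\suppu$, the negativity of the center exponent guaranteeing that the entire segment $[x,y]_c$ remains in one stable manifold. The crux here — and the principal difficulty of the whole lemma — is that $\suppu$, although $u$-saturated, need not be center- or $cs$-saturated, so a support point near $f^n x_0$ need not sit on the particular center curve $f^n(\gamma_x)$; reconciling this requires the forward contraction along $E^c$ to be strong enough to force the curve into the support, which is precisely where the hyperbolicity hypothesis $\lambda^c(\mu)<0$ is essential.
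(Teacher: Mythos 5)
You should first be aware that the paper contains no proof of this statement to compare against: it is imported wholesale from \cite{Raul} (Lemmas 4.1 and 4.3 there), so your sketch can only be judged on its own terms. For part (1) your plan is essentially right, and the ``main obstacle'' you flag dissolves once the approximating point is chosen correctly: having found $n_k$ with $f^{n_k}(z)\to p\in\Lambda_N$ (which does follow from $z\in B(\mu)$ and $\mu(\Lambda_N)>0$), you need neither absolute continuity nor any control of the transversally Cantor structure of $\Lambda_N$, because a \emph{single} Pesin stable disk suffices. The disk $W^s_{\delta}(p)$ has uniform size $\delta$ and is transverse to the unstable foliation, so as soon as $d(f^{n_k}(z),p)\ll\delta$ the local leaf $\mathcal{W}^{uu}_{loc}(f^{n_k}(z))$ crosses it in a point $w_k$ lying \emph{on the unstable leaf of} $f^{n_k}(z)$ at small distance; then $f^{-n_k}(w_k)\in W^s(f^{-n_k}(p))\subset W^s(\Lambda)$ and $f^{-n_k}(w_k)\to z$ since the displacement is entirely along a leaf that $f^{-n_k}$ contracts. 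Written this way, (1) is complete; as you left it, it is only a declared difficulty.

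Part (2) is where the genuine gap lies, and it is not a technicality: the step you label the ``crux'' is the entire content of the statement, and your proposed mechanism does not close it. Knowing that $\suppu$ meets $\Wc(x)$ (your appeal to the argument of Lemma \ref{begin} does give this) says nothing about whether the nearest support point $y$ on $\Wc(x)$ satisfies $[x,y]_c\subset W^s(y)$: only the endpoint $x$ is known to have contracting center behaviour, while $[x,y]_c$ has length bounded only by the uniform gap size, so forward contraction of the \emph{whole} segment is exactly what must be proved. Your forward-iteration idea founders where you say it does: $\suppu$ is $u$-saturated but not $cs$-saturated, so proximity of $f^n(\gamma_x)$ to $f^n(x_0)\in\suppu$ does not place a support point on the curve $f^n(\gamma_x)$ itself, and no amount of center contraction repairs this by itself. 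The argument that works (cf.\ \cite{Raul}) runs through the maximal center interval $J\ni x$ all of whose compact subintervals are uniformly contracted in the future: either $J$ is the whole leaf, in which case any point of $\Wc(x)\cap\suppu$ works, or one must show that a finite endpoint of $J$ necessarily lies in $\suppu$ — a dichotomy your sketch never engages. Note also that some such use of the support is unavoidable: if your outline went through using only compactness, invariance, $u$-saturation and nonempty intersection with every center leaf, it would apply equally to $K=\mathrm{supp}(\nu)$, and produce $y\in K$ forward-asymptotic to $x\in B(\mu)$, hence $K\cap\suppu\neq\emptyset$ — which is precisely the contradiction exploited in Lemma \ref{h} and is false in the intermingled setting.
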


\begin{lemma}~\cite[Lemma 4.4]{Raul}\label{h}
There exists $h>0$ such that any segment of length less than $h$ in a center leaf contains at most two points in $K$.
\end{lemma}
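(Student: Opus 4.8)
The plan is to use the negative center exponent to force the Pesin stable manifolds of $\mu$-regular points to be disjoint from $K$, and then to read off from the preceding lemma a $K$-free center interval of a definite length abutting every point of $K$; a center segment shorter than that length then cannot carry three points of $K$. First I would note that $K$ and $\suppu$ are disjoint compact sets, so $\eta_0:=d(K,\suppu)>0$. Using this I would show $K\cap W^s(y)=\emptyset$ for every regular $y\in\Lambda$: if $p\in K\cap W^s(y)$, then $d(f^np,f^ny)\to 0$, hence $d(f^np,\suppu)\to 0$ because $f^ny\in\suppu$; but $f^np\in K$ by invariance, so $d(f^np,\suppu)\ge\eta_0$, a contradiction. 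Consequently $K\cap W^s(\Lambda)=\emptyset$, and the center segment $[x,y]_c\subset W^s(y)$ furnished by part (2) of the preceding lemma is a center interval meeting $K$ nowhere, of length at least $d(x,\suppu)$ since $y\in\suppu$.

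Next I would show that every $p\in K$ bounds, on its center leaf $L:=\Wc(p)$, a $K$-free center interval of length $h:=\eta_0/2$. By part (1), $p\in\overline{B(\mu)}\subset\overline{W^s(\Lambda)}$, so I would choose $x_k\in W^s(\Lambda)\cap\Wcs(p)$ with $x_k\to p$ along the center-stable leaf. Part (2) gives $y_k\in\suppu$ with $[x_k,y_k]_c\subset W^s(y_k)$, a $K$-free center interval of length at least $d(x_k,\suppu)\to d(p,\suppu)\ge\eta_0$. Pushing $[x_k,y_k]_c$ onto $L$ by the strong-stable holonomy inside $\Wcs(p)$, which keeps it inside $W^s(y_k)$ and hence $K$-free, yields $K$-free center intervals on $L$ of length at least $\eta_0-o(1)$ with one endpoint tending to $p$. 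Passing to a subsequence so that these intervals lie on a fixed side of $p$, and using that $K$ is closed, their limit is a $K$-free interval $(p,p+h)$ (or $(p-h,p)$) as claimed.

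The conclusion is then a short combinatorial step. Suppose a center segment of length less than $h$ contained three points $p_1,p_2,p_3$ of $K$ ordered along the leaf. The middle point $p_2$ bounds a $K$-free center interval of length $h$ on one side; if on the right it contains $(p_2,p_2+h)\ni p_3$, if on the left it contains $(p_2-h,p_2)\ni p_1$, either way contradicting membership of $p_1,p_3$ in $K$. Hence at most two points of $K$ lie on any center segment of length less than $h$, and two is genuinely attainable, since the two points may bound their $K$-free intervals on the two outer sides.

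The main obstacle is the approximation in the second step: part (1) is a statement in the ambient manifold, whereas I need points of $W^s(\Lambda)$ approaching $p$ \emph{inside} the center-stable leaf $\Wcs(p)$, and I need the resulting interval lengths controlled by a constant independent of $p$ and $L$. Turning the ambient inclusion $\overline{B(\mu)}\subset\overline{W^s(\Lambda)}$ into such leafwise data is where the absolute continuity and uniform size of Pesin stable manifolds over a compact regular block, the $\mathcal{W}^{ss}$-saturation of the stable manifolds, and the continuity of the invariant foliations must be combined; the ambient gap $\eta_0>0$ is precisely what converts these ingredients into the uniform length bound $h$ in the statement.
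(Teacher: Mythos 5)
Your overall strategy (establish a uniform $K$-free center gap on one side of every point of $K$, then count) is a plausible reorganization of the argument, and your first step ($K\cap W^s(y)=\emptyset$ for $y\in\Lambda$, hence the segments $[x,y]_c\subset W^s(y)$ from the cited lemma are $K$-free) is correct and is implicitly used by the paper too. But the load-bearing step is exactly the one you flag and do not resolve: you need points $x_k\in W^s(\Lambda)$ approaching $p$ \emph{inside the leaf} $\Wcs(p)$, whereas Lemma 4.1 of~\cite{Raul} only gives $p\in\overline{W^s(\Lambda)}$ in the ambient manifold. As stated this leafwise approximation is not justified and may simply fail: $W^s(\Lambda)$ is a union of Pesin stable disks over a full-measure set, and nothing guarantees that the particular leaf $\Wcs(p)$ through a given $p\in K$ meets $W^s(\Lambda)$ arbitrarily close to $p$. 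Since the whole construction of the gap at $p$, and hence the final counting, hinges on this, the proposal has a genuine gap rather than a mere technicality.

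The paper's proof supplies precisely the device you are missing, and it needs only ambient approximation. Assuming three points $x,y,z$ of $K$ on a short center segment with $y$ in the middle, it takes $q\in W^s(\Lambda)$ ambient-close to $y$ and the segment $[q,p']_c\subset W^s(p')$ with $p'\in\suppu$; since $\mathrm{dist}(K,\suppu)>k$, this segment must leave a small neighborhood of $y$ and therefore must cross one of the topological codimension-one sets $\mathcal{W}^{ss}(\mathcal{W}^{uu}(x))$ or $\mathcal{W}^{ss}(\mathcal{W}^{uu}(z))$ bracketing $y$. A crossing point $y'$ satisfies $\omega(y')\subset\suppu$ (from $y'\in W^s(p')$) and $\omega(y')\subset K$ (from $K$ being closed, invariant and u-saturated), so $\omega(y')\subset\suppu\cap K=\emptyset$, a contradiction. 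If you want to keep your ``uniform gap'' formulation, the fix is not to insist on $x_k\in\Wcs(p)$ but to transport the $K$-free interval from $\Wc(q)$ to $\Wc(p)$ by local product structure: the $\mathcal{W}^{uu}$-holonomy preserves $K$-freeness because $K$ is u-saturated, and the $\mathcal{W}^{ss}$-saturation of $W^s(y_k)$ still consists of points asymptotic to $\suppu$, hence disjoint from $K$. Without one of these two devices the argument does not close.
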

\begin{proof}
Assume for any $h>0$, there exist $x$, $y$, $z$ in a center segment of length less than $h$. Let $y$ lie between $x$ and $z$.

Let $k>0$ such that $\mathrm{dist}(K,\suppu)>k$. Since $y\in K\subset \overline{B(\mu)}\subset\overline{W^s(\Lambda)}$, there exist $q\in  W^s(\Lambda)$ arbitrarily close to $y$ and $p\in \suppu\cap\Wc(q)$ such that $[q,p]_c\subset W^s(p)$.

$h$ being small enough and $q$ close to $y$ as we like, $[q,p]_c$ must intersect either $\mathcal{W}^{ss}(\mathcal{W}^{uu}(x))$ or $\mathcal{W}^{ss}(\mathcal{W}^{uu}(z))$. Take a point $y^\prime$ from the intersection, then $\omega(y^\prime)\subset \suppu\cap K=\emptyset$, which is absurd.

So there exists $h>0$ such that any segment of length less than $h$ in a center leaf contains at most two points in $K$.
\end{proof}

\begin{lemma}
For any $x\in M$, $\sharp\Phi_K(x)<+\infty$.
\end{lemma}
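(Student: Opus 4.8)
The plan is to bound $\sharp\Phi_K(x)$ by combining the purely local estimate of Lemma \ref{h} with a \emph{uniform} bound on the center length of the segments $[x,f(x)]_c$, and then to conclude by a covering-and-counting argument. First I would record the length bound: since $f\in\mathcal{U}$ is $C^1$-close to the time-one map $\phi_1$, the dynamics on each center leaf is close to the unit translation, and in particular the segment $[x,f(x)]_c$ is generating, as noted in the Introduction. Consequently there is a constant $L>0$, \emph{independent of} $x$, such that the length of $[x,f(x)]_c$ measured along $\Wc(x)$ is at most $L$. (Compactness of $M$ together with continuity of $f$ and of the center foliation already forces such a bound.)

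Next, with $h>0$ the constant furnished by Lemma \ref{h}, I would set $N=\lceil L/h\rceil+1$ and subdivide the oriented segment $[x,f(x)]_c$ into $N$ consecutive subsegments $I_1,\dots,I_N$ along $\Wc(x)$, each of length strictly less than $h$; this is possible precisely because $N>L/h$. By Lemma \ref{h} each $I_j$ meets $K$ in at most two points, so
\[ \sharp\Phi_K(x)=\sharp\big([x,f(x)]_c\cap K\big)\le\sum_{j=1}^{N}\sharp(I_j\cap K)\le 2N<+\infty. \]
Any overcounting at the shared endpoints of consecutive subsegments only weakens the middle inequality, so the estimate is valid.

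Since $N$ depends only on $L$ and $h$, the bound $2N$ is in fact uniform over $x\in M$; this uniformity is exactly what we will exploit afterwards when upgrading ``finite'' to ``exactly $k$ orbits per center leaf.'' The only step requiring real care — and the mild obstacle here — is making the uniform center-length bound $L$ on $[x,f(x)]_c$ explicit; once that is in hand, the remainder is a direct covering argument resting entirely on Lemma \ref{h}, and I expect no further difficulty.
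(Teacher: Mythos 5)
Your argument is correct and is exactly the paper's proof, merely written out in full: the paper likewise covers $[x,f(x)]_c$ by finitely many center subsegments of length less than $h$ and applies Lemma \ref{h} to each. The added observation that the bound $2N$ is uniform in $x$ is a harmless (and true) strengthening, though the paper does not need it at this stage.
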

\begin{proof}
It is obvious from Lemma \ref{h} and the fact that $[x,f(x)]_c$ is covered by finitely many segments with length less than $h$.
\end{proof}

Since $K$ is invariant and $\sharp\Phi_K(x)<\infty$ for any $x\in M$, there exists $k(x)\geq 1$ such that $\Wc(x)\cap K$ consists of $k(x)$ orbits of $f$, and $\sharp\Phi_K(y)=k(x)$ if $y$ is in $\Wc(x)\setminus K$, $\sharp\Phi_K(y)=k(x)+1$ if $y\in \Wc(x)\cap K$.

$k(x)$ is u-invariant because $K$ is u-saturated. Therefore \emph{$k(x)$ is constant on each center unstable leaf.
}
\begin{lemma}\label{const}
There is $k\in \mathbb{N}$ such that $k(z)=k$ for any $z\in M$.
\end{lemma}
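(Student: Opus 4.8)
The plan is to exploit that $k(\cdot)$ is already constant on each center-unstable leaf and that, by Lemma \ref{begin}, every center-unstable leaf is dense; so it suffices to prove that $k$ cannot take two distinct values, which I will extract from a one-sided semicontinuity of $k$ together with this density.

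First I would record the elementary bounds. Since $f\in\mathcal{U}$ is close to the time-one map, the fundamental segments $[x,f(x)]_c$ have uniformly bounded center-length, say by $L$; covering each such segment by at most $\lceil L/h\rceil+1$ subsegments of length less than $h$ and applying Lemma \ref{h}, the cardinalities $\sharp\Phi_K(x)$, and hence $k(x)$, are uniformly bounded. Thus $k$ takes finitely many positive integer values, and both $k_{\min}=\min_M k$ and $k_{\max}=\max_M k$ are attained. Now the reduction: suppose $k$ is lower semicontinuous on $M$. Pick $x_0$ with $k(x_0)=k_{\min}$ and set $L_0=\Wcu(x_0)$; then $k\equiv k_{\min}$ on $L_0$, and $L_0$ is dense by Lemma \ref{begin}. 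For arbitrary $z\in M$ choose $y_n\in L_0$ with $y_n\to z$; lower semicontinuity gives $k(z)\le \liminf_{y\to z}k(y)\le \liminf_n k(y_n)=k_{\min}$, whence $k(z)=k_{\min}$. (The symmetric argument from upper semicontinuity uses $k_{\max}$.) So the whole statement reduces to one-sided semicontinuity of the counting function.

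The main work, and the main obstacle, is this semicontinuity, and the difficulty is entirely transverse to the center-unstable foliation. Along a center-unstable leaf there is nothing to prove, since $k$ is constant there and $K$ is u-saturated. The delicate direction is the strong-stable one: as $y$ moves off $\Wcu(x)$ along $\mathcal{W}^{ss}$, the one-dimensional slice $\Wc(y)\cap K$ can a priori gain or lose points, because $K$ is not $s$-saturated. I would analyze this in a local product chart adapted to $E^s\oplus E^c\oplus E^u$, in which $K$ reads as a closed $u$-saturated set whose trace on a center-stable transversal is some closed set $T$, and $\Wc(y)\cap K$ is the slice of $T$ at the stable coordinate of $y$. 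I would use the upper semicontinuity of the set-valued map $\Phi_K$ to prevent points from escaping in the chart, and Lemma \ref{h} to bound by two the number of points of $T$ that can collapse together along any short center fibre.

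The genuine enemy is exactly a pair of local sheets of $K$ becoming tangent along the center direction (a \emph{fold} of $T$), which would make the count jump precisely at the limiting leaf; ruling this out is where I expect the real difficulty to lie. I would attack it by combining Lemma \ref{h} with the fact that the continuity points of $\Phi_K$ form a residual set, so that $\sharp\Phi_K$ — and hence $k$ — is locally constant on a dense set of leaves, and then propagate that value to every leaf through the density of the center-unstable leaves on which $k$ is constant. Once the chosen semicontinuity is established, the reduction above produces a single constant $k$ with $k(z)=k$ for all $z\in M$, which is the assertion of the lemma.
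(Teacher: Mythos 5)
Your overall architecture matches the paper's: $k$ is constant on each center--unstable leaf, every such leaf is dense by Lemma \ref{begin}, so it suffices to show $k$ is constant on some open set (the paper) or one-sidedly semicontinuous (your reduction), and you correctly isolate the only real enemy, namely two points of $\Phi_K(y_n)$ collapsing onto a single point of $\Phi_K(x)$ as $y_n\to x$. But that is exactly where your proposal stops: you say ruling out the fold ``is where I expect the real difficulty to lie'' and your proposed attack is circular. Continuity of the set-valued map $\Phi_K$ at a point $x$ only gives the Hausdorff convergence $\Phi_K(y)\to\Phi_K(x)$, hence $k(y)\ge k(x)$ nearby; it does \emph{not} give local constancy of $\sharp\Phi_K$, since two points of $\Phi_K(y)$ may still merge in the limit --- so ``$\sharp\Phi_K$ is locally constant on a dense set of leaves'' is an assertion of the very fact to be proved, not a consequence of residuality of continuity points. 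Moreover, constancy of $k$ on a dense set of leaves does not propagate to all of $M$ without the missing semicontinuity; the paper instead gets constancy on an \emph{open} set $V$, which every (dense) center--unstable leaf must meet.

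The missing argument is the Claim in the paper's proof, and it is a two-scale version of the mechanism of Lemma \ref{h}. Suppose $y_n\to x$ with $k(y_n)>k(x)$, and pick $y_n^1,y_n^2\in\Phi_K(y_n)$ converging to the same $\bar x\in\Phi_K(x)$. Fix $n$ large so that $\mathrm{d}(y_n^1,y_n^2)$ is small, then $m\gg n$ so that $\mathrm{d}(y_m^1,y_m^2)$ is much smaller still. Transporting $y_n^1,y_n^2$ to $\Wc(y_m)$ by the $\mathcal{W}^{ss}\circ\mathcal{W}^{uu}$ holonomy gives points $\omega_m^1,\omega_m^2$ whose mutual distance is comparable to $\mathrm{d}(y_n^1,y_n^2)$, so at most one of them lies between $y_m^1$ and $y_m^2$. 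One then runs the Lemma \ref{h} argument on this configuration: a point $q\in W^s(\Lambda)$ near a middle point, joined inside a Pesin stable manifold to some $p\in\suppu$ by a center segment, forces that segment to cross $\mathcal{W}^{ss}(\mathcal{W}^{uu}(z))$ for some $z\in K$, producing a point $y'$ with $\omega(y')\subset\suppu\cap K=\emptyset$, a contradiction. Without this step (or an equivalent one) your proof is incomplete; everything before and after it in your write-up is correct but is the easy part.
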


\begin{proof}
Let $x$ be a continuity point of $\Phi_K$. There exists a neighborhood $U$ of $x$ such that $k(y)\geq k(x)$ for any $y\in U$.
\begin{claim}
$k(y)$ is constant in a small neighborhood $V$ of $x$.
\end{claim}
\begin{proof}[Proof of the Claim]
Assume there exist $\{y_n\} $ such that $y_n\rightarrow x$, $k(y_n)>k(x)$. Let $\bar{x}\in \Phi_K(x)$, $y_n^1,y_n^2\in \Phi_K(y_n)$ such that $y_n^i\rightarrow\bar{x},i=1,2$.

Let $n$ be sufficient large such that $y_n^1,y_n^2$ is close to $\bar{x}$ as we like and $\mathrm{d}(y_n^1,y_n^2)$ is small enough. $m$ is much greater than $n$ such that $\mathrm{d}(y_m^1,y_m^2)$ is much smaller than $\mathrm{d}(y_n^1,y_n^2)$.

Take $\omega_m^i$ from $\mathcal{W}^{ss}(\mathcal{W}^{uu}(y_n^i))\cap \Wc(y_m)$ for $i=1,2$. Then at most one of $\{\omega_m^1,\omega_m^2\}$ lies between $y_m^1$ and $y_m^2$. Applying similar arguments to Lemma \ref{h} can lead to a contradiction.
\end{proof}
For any $z\in M$, $\Wcu(z)$ is dense, therefore $\Wcu(z)\cap V\neq\emptyset$. Since $k(y)$ is constant along each center unstable leaf, $k(z)=k(x)$. So $k(z)$ is constant in $M$.
\end{proof}

Given a compact subset $\Lambda$ of $M$, a \emph{$r$ dimensional $C^0$ lamination} of $\Lambda$ is a partition $\mathcal{L}$ of $\Lambda$ such that: there is a collection of closed domains $\{D\}$, the union of whose interiors cover $\Lambda$, and for each domain $D$, there is a compact set $Y$ and a homeomorphism $\varphi: B^r\times Y\rightarrow D\cap \Lambda$ such that $\varphi(B^r\times{y})$ is the connected component of $\mathcal{L}(\varphi(0,y))\cap D$.

\begin{lemma}\label{c0}
$K$ has a codimension-one $C^0$ lamination $\mathcal{L}$.

\end{lemma}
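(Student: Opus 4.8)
The plan is to produce the lamination charts from local foliation coordinates in which $K$ becomes a union of disjoint horizontal sheets, one for each intersection point with a center segment. First I would fix $p\in K$ and build a chart adapted to the three transverse foliations. Inside the center-unstable leaf $\Wcu(p)$ the foliations $\mathcal{W}^{uu}$ and $\Wc$ are transverse and complementary, so they furnish product coordinates $(u,c)\in I^u\times I^c$ on a plaque of $\Wcu(p)$, with strong unstable leaves horizontal ($c$ constant) and center leaves vertical ($u$ constant). Varying the center-unstable leaf by strong stable holonomy then coordinatizes a whole neighbourhood $N$ of $p$ as $I^s\times I^u\times I^c$, where the $c$-fibres $\{(s_0,u_0)\}\times I^c$ are center segments and the slices $I^u\times\{(s_0,c_0)\}$ are strong unstable plaques. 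Because $K$ is u-saturated, $K\cap N$ is a union of such unstable plaques, so it has the form $I^u\times\tilde A$ with $\tilde A\subset I^s\times I^c$ closed; everything then reduces to understanding the profile $\tilde A$.

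Second, I would analyse $\tilde A$ slice by slice. For each fixed $s$ the slice $\tilde A\cap(\{s\}\times I^c)$ is exactly the intersection of $K$ with a center segment, so by Lemma~\ref{h} it has at most two points on any subinterval of length $<h$, and in particular it is finite; shrinking $I^c$ I may assume its endpoints avoid $K$. The goal is to show that over the base $I^s\times I^u$ the set $\tilde A$ is a finite disjoint union of continuous graphs $c=\gamma_j(s)$, $j=1,\dots,m$. Upper semicontinuity of $\Phi_K$ (equivalently, closedness of $K$) gives that no new intersection point can appear under a small perturbation of $s$, i.e. the slice multifunction is upper semicontinuous. The opposite direction---that no sheet can disappear and that two sheets cannot collide as $s$ varies---is the crucial point, and I would extract it from Lemma~\ref{const}: over a full fundamental domain $[z,f(z)]_c$ the number of intersection points is identically $k$ for base points off $K$, so along a path in the $s$-direction the total count cannot drop. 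A collision of two branches, or the loss of a branch, would make the count strictly smaller at some parameter, contradicting constancy of $k$, while Lemma~\ref{h} rules out three branches approaching simultaneously, so no higher-order degeneration occurs. Hence over each $s$ the count is locally constant and, combined with upper semicontinuity, the $m$ branches $\gamma_j$ are continuous and pairwise disjoint.

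Third, I would assemble the charts. Each sheet $S_j=\{(s,u,\gamma_j(s)):s\in I^s,\ u\in I^u\}$ is the graph of a continuous function of $(s,u)$, hence a codimension-one topological disk; being constant in the $u$-coordinate it contains the local strong unstable plaques, so it is genuinely of $E^s\oplus E^u$ type. Setting $B^{d-1}=I^s\times I^u$, taking $Y=\{1,\dots,m\}$ as the finite set of heights at $s=0$, and defining $\varphi(s,u;j)=(s,u,\gamma_j(s))$ gives a homeomorphism onto $K\cap N$ whose slices $\varphi(B^{d-1}\times\{j\})$ are the local leaves, exactly matching the definition of a codimension-one $C^0$ lamination. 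The partition $\mathcal{L}$ is generated by declaring two points equivalent when they lie on a common sheet, and u-saturation together with invariance of $K$ makes this consistent across overlapping charts.

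The main obstacle I anticipate is the persistence/non-collision step in the second paragraph: since $K$ is only u-saturated and not s-saturated, moving in the $s$-direction means applying strong stable holonomy, which need not preserve $K$, so continuity of the sheets cannot be read off from holonomy alone. The whole weight of the argument rests on converting the global count invariance of Lemma~\ref{const} and the local bound of Lemma~\ref{h} into honest lower semicontinuity of the slice multifunction, thereby upgrading upper semicontinuity to genuine continuity of each branch $\gamma_j$.
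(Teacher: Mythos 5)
Your overall scheme is the same as the paper's: a product chart $D=\cup_{y\in B}[y,f(y)]_c$ over a codimension-one transversal $B$ disjoint from $K$, exactly $k$ points of $K$ on each center fibre (Lemma \ref{const}), an ordering of these points along the center orientation, and a proof that the resulting map $(y,i)\mapsto y_i$ is a homeomorphism. The gap is precisely at the step you single out as the crux. You assert that ``a collision of two branches, or the loss of a branch, would make the count strictly smaller at some parameter, contradicting constancy of $k$.'' That is not true: if for $y_n\to y$ two of the $k$ points $(y_n)_{i_1},(y_n)_{i_2}$ accumulate to the same $y_{j'}$, the limit slice $\Phi_K(y)$ still has exactly $k$ points; what fails is surjectivity of the limit assignment (some $y_j$ is orphaned), and this is perfectly compatible with closedness of $K$, upper semicontinuity of $\Phi_K$, and a constant count. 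A model profile is $\{(s,0)\}\cup\{(s,|s|):s\neq0\}\cup\{(0,1)\}$: two points on every vertical fibre, closed, yet not a union of two continuous graphs. Lemma \ref{h} does not rescue the argument either, since it only forbids \emph{three} points of $K$ in a short center segment, while the dangerous degeneration is exactly two branches colliding.

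The paper closes this case by a dynamical rather than combinatorial argument (the Claim inside Lemma \ref{const}, reused in the continuity proof of $\psi$): given $y_n^1,y_n^2\in K$ on center leaves accumulating to a common point, intersect the ``walls'' $\mathcal{W}^{ss}(\mathcal{W}^{uu}(y_n^i))$ with the center leaf of a much later term $y_m$, observe that at most one of the two wall points lies between $y_m^1$ and $y_m^2$, and then run the crossing argument from the proof of Lemma \ref{h}: a point $q\in W^s(\Lambda)$ close to the trapped $K$-point satisfies $[q,p]_c\subset W^s(p)$ with $p\in\suppu$ at definite distance from $K$, so this center segment must cross a wall, producing $y'$ with $\omega(y')\subset\suppu\cap K=\emptyset$. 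Some version of this, using $K\subset\overline{B(\mu)}\subset\overline{W^s(\Lambda)}$ and $\mathrm{dist}(K,\suppu)>0$, is indispensable; without it the continuity and disjointness of your branches $\gamma_j$ are unproved. The remainder of your chart construction and the assembly of $\mathcal{L}$ are fine and match the paper.
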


\begin{proof}
Take $x$ outside $K$, $U$ a small neighborhood of $x$ such that $U\cap K=\emptyset$. Let $B$ a closed codimension-one disk contained in $U$ transverse to $\Wc$ and $x\in B$.

$D=\cup_{y\in B}[y,f(y)]_c$ is a closed domain. It is obvious that $K$ is covered by the interiors of such kind of $D$'s.

Any $y\in B$, $\Phi_K(y)=\{y_1,\cdots,y_k\}$ with $k$ as in Lemma \ref{const}. Suppose $y_i$'s are listed respecting the orientation of $\Wc$, and we can denote $y_1<y_2<\cdots<y_k$.

Define\begin{align*}
        \psi: \ B\times \{1,\cdots,k\} & \rightarrow K\cap D \\
        (y,i) & \mapsto y_i
      \end{align*}
By taking $B$ small enough we can see that $\psi$ is injective. We are left to show that $\psi$ is continuous.

Given any $(y,i)$, any $y_n\rightarrow y$, let $\bar{y_i}$ be an accumulation point of $\{(y_n)_i\}$. Then $\bar{y_i}=y_j$ for some $j$.

Suppose $j<i$, then there exist $1\leq j^\prime\leq j$, $1\leq i_1<i_2\leq i$ such that $(y_n)_{i_1},(y_n)_{i_2}$ accumulate to $y_{j^\prime}$. Similar arguments to Lemma \ref{h} will lead to a contradiction.

Neither $j>i$ holds.

Therefore $j=i$. $\psi$ is continuous at any given $(y,i)$.

Since $\psi$ is defined on compact space, $\psi$ is a homeomorphism.

$K$ is partitioned into connected components. The partition is denoted by $\mathcal{L}$.
The above arguments imply $\mathcal{L}$ is a codimension-one $C^0$ lamination.
\end{proof}

\begin{lemma}
$K$ is s-saturated.
\end{lemma}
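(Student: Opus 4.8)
The plan is to show that for every $x\in K$ the local strong stable manifold $\mathcal{W}^{ss}_{loc}(x)$ is contained in $K$; since $K$ is invariant and $\mathcal{W}^{ss}(x)=\bigcup_{n\ge 0}f^{-n}\big(\mathcal{W}^{ss}_{loc}(f^{n}x)\big)$, this gives that $K$ is s-saturated. So fix $x\in K$ and suppose, for contradiction, that some $y\in \mathcal{W}^{ss}_{loc}(x)$ does not lie in $K$. By Lemma \ref{const} the center leaf $\Wc(y)$ meets $K$ in exactly $k$ orbits, so $K\cap\Wc(y)$ is a discrete set and the center distance $\delta:=\min\{\,\mathrm{dist}_c(y,q): q\in K\cap\Wc(y)\,\}$ is strictly positive. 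The whole point is to contract this fixed positive number to $0$ by means of domination, which is absurd.

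First I would iterate forward. Since $y\in\mathcal{W}^{ss}(x)$, uniform contraction of $E^{s}$ gives a constant $C_0$ with $d(f^{n}x,f^{n}y)\le C_0\,\|Df^{n}|_{E^{s}}\|\,d(x,y)\to 0$ for all $n\ge 0$. Because $K$ is compact it is covered by finitely many lamination boxes of the $C^{0}$ lamination $\mathcal{L}$ from Lemma \ref{c0}, whose leaves are uniformly transverse to the center foliation; let $\rho>0$ be a common box radius. For $n$ large enough that $d(f^{n}x,f^{n}y)<\rho$, the center leaf $\Wc(f^{n}y)$ crosses the leaf $\mathcal{L}(f^{n}x)\subset K$ at a nearby point $\hat q_{n}$, and the uniform transversality over the finite family of boxes yields a constant $C_1$, independent of $n$, with $\mathrm{dist}_c(f^{n}y,\hat q_{n})\le C_1\,d(f^{n}x,f^{n}y)$. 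Since $\hat q_{n}\in K\cap\Wc(f^{n}y)=f^{n}\big(K\cap\Wc(y)\big)$, we may write $\hat q_{n}=f^{n}(q_{n})$ for some $q_{n}\in K\cap\Wc(y)$.

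It remains to pull this back to time $0$ and invoke domination. Bounding the center contraction of $f^{-n}$ along the orbit by $1/m(Df^{n}|_{E^{c}})$, I obtain $\mathrm{dist}_c(y,q_{n})=\mathrm{dist}_c\big(f^{-n}(f^{n}y),f^{-n}(\hat q_{n})\big)\le C_1\,d(f^{n}x,f^{n}y)/m(Df^{n}|_{E^{c}})\le C_0C_1\,d(x,y)\cdot\|Df^{n}|_{E^{s}}\|/m(Df^{n}|_{E^{c}})$. By the domination between $E^{s}$ and $E^{c}$ in the partially hyperbolic splitting, this ratio is at most $C_2\lambda^{n}$ with $\lambda<1$, so $\mathrm{dist}_c(y,q_{n})\to 0$. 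But each $q_{n}$ lies in the fixed finite set $K\cap\Wc(y)$, on which the center distance to $y$ is at least $\delta>0$; this contradiction shows $y\in K$, hence $K$ is s-saturated. The one delicate point I would treat carefully is the uniform transversality estimate of the second paragraph: one must rule out that the lamination leaves ever become tangent to the center direction and guarantee that the comparison constant $C_1$ is uniform over all boxes — this is precisely where the nondegenerate, continuous chart structure of the $C^{0}$ lamination from Lemma \ref{c0} and the compactness of $K$ are indispensable.
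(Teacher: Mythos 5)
Your reduction to local strong stable manifolds and the overall scheme (push forward, hit the lamination, pull back, contradict $\delta>0$ via domination) is plausible on its face, but it rests on an estimate that is not available — and you flag the weak point yourself without resolving it. The inequality $\mathrm{dist}_c(f^{n}y,\hat q_{n})\le C_1\, d(f^{n}x,f^{n}y)$ is a uniform Lipschitz transversality of the lamination leaves with respect to the center foliation. Lemma \ref{c0} only produces a $C^{0}$ lamination: the charts are homeomorphisms with no derivative or Lipschitz control whatsoever (the ``nondegenerate and continuous $\partial\varphi/\partial x$'' clause you invoke belongs to the definition of a $C^{1}$ lamination, which is obtained only at the end of the paper, \emph{after} the present lemma). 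A merely continuous leaf can approach the center fiber through $f^{n}x$ with unbounded slope, so compactness gives at best a uniform modulus of continuity $\omega$, i.e. $\mathrm{dist}_c(f^{n}y,\hat q_{n})\le\omega\bigl(d(f^{n}x,f^{n}y)\bigr)$; after dividing by $m(Df^{n}|E^{c})$, which may itself be exponentially small since $\lambda^{c}(\mu)<0$, the quantity $\omega\bigl(C\|Df^{n}|E^{s}\|\bigr)/m(Df^{n}|E^{c})$ need not tend to $0$. Even H\"older leaves would require a bunching-type inequality between $E^{s}$ and $E^{c}$ that domination alone does not provide. So this is not a technicality: the quantitative transversality you assume is essentially the regularity of $K$ in the stable direction that the lemma is supposed to establish, and assuming it makes the argument circular.

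A further warning sign is that your argument never uses the hypotheses $K\subset\overline{B(\mu)}$ and $K\cap\suppu=\emptyset$, whereas the paper's proof turns on exactly these. The paper fixes local coordinates on $\Wcs(x)$, writes $\Wcs_{loc}(x)\cap K$ as a graph $g$ over the strong stable direction, and shows $g$ is tangent to $E^{s}$ by contradiction: if not, an intermediate value argument shows $\mathrm{graph}(g)$ meets $\mathcal{W}^{ss}_{loc}$ of points off $K$ in a robust way; since $K\subset\overline{W^{s}(\Lambda)}$, one then finds $q\in W^{s}(\Lambda)$ whose strong stable manifold meets $K$, so that $\omega(q)\subset K$ while also $\omega(q)\subset\suppu$, contradicting $K\cap\suppu=\emptyset$. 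Unique integrability of $E^{s}$ then upgrades tangency to s-saturation. To salvage your scheme you would first need a Lipschitz (indeed tangent-to-$E^{s}$) transversality statement for the leaves of $\mathcal{L}$, at which point you would be reproving the paper's lemma.
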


\begin{proof}
Since if dynamically coherent, We are reduced to prove that for any $x\in K$, $\Wcs(x)\cap K$ is s-saturated. Since $E^s$ is uniquely integrable, we only need to show that $\Wcs(x)\cap K$ is tangent to $E^s(x)$.

Fix a small coordinate neighborhood $U$ of $x$ in $\Wcs(x)$ such that $x$ is the origin, $\Wc_{loc}(x)=\{0\}\times \mathbb{R}, \mathcal{W}^{ss}_{loc}(x)=\mathbb{R}^s\times\{0\}$.

Because $K$ is topologically transverse to $\Wc$ and by the arguments of Lemma \ref{c0} each leaf is an embedding, there exist $g:\mathbb{R}^s\rightarrow \mathbb{R}$ such that $g(0)=0$, $\mathrm{graph}(g)=\Wcs_{loc}(x)\cap K\cap U$.

Suppose $Dg(0)\neq0$, then there exist $v_n\in \mathbb{R}^s$ such that $\|v_n\|\rightarrow 0$, $|g(v_n)|/\|v_n\|\rightarrow 2b>0$.

Assume $g(v_n)>b\|v_n\|$ for some large $n$. For $0<a<\frac{b}{2}\|v_n\|$,
\[\mathcal{W}^{ss}_{loc}(0,a)=\{(x,y)|y=a+\varphi_a(x),\varphi_a(0)=0,\|D\varphi_a\|<\frac{b}{2}\}.\]

For $0\leq t\leq 1$, define $h_a(t)=g(tv_n)-\varphi_a(tv_n)-a$. Then
\begin{align*}
  h_a(1)=& \ g(v_n)-\varphi_a(v_n)-a\\
  > &\  b\|v_n\|-\frac{b}{2}\|v_n\|-\frac{b}{2}\|v_n\|\\
  > &\  0,
\end{align*}
\[h_a(0)=-a<0.\]
There exist $0<t<1$ such that $h_a(t)=0$. Therefore $g(tv_n)=\varphi_a(tv_n)+a$, that is to say $\mathrm{graph}(g)\cap \mathcal{W}^{ss}_{loc}(0,a)\ni(tv_n,g(tv_n))$. So $\mathrm{graph}(g)\cap \mathcal{W}^{ss}_{loc}(0,a)\neq\emptyset$ for $0<a<\frac{b}{2}\|v_n\|$.

 For a fixed $a$, let $p=(v_n,\varphi_a(v_n)+a)$, then $p\in \mathcal{W}^{ss}(0,a)\setminus K$.
There exist a neighborhoos $V$ of $p$ such that any $y\in V$, $\mathcal{W}^{ss}(y)\cap \mathrm{graph}(g)\neq \emptyset$.

Since $(tv_n,g(tv_n))\in \mathrm{graph}(g)\subset K\subset \overline{B(\mu)}\subset\overline{W^s(\Lambda)}$, there exist $q\in W^s(\Lambda) $ such that $q$ is close to $(tv_n,g(tv_n))$. $\mathcal{W}^{ss}(q)$ intersects $V$ at some $y$ by the continuity of strong stable foliation. Since $\mathcal{W}^{ss}(y)\cap \mathrm{graph}(g)\neq\emptyset$ one has $\mathcal{W}^{ss}(q)\cap \mathrm{graph}(g)\neq\emptyset$. Consequently $\omega(q)\subset K$.

On the other hand, $\omega(q)\subset \suppu$. so $\suppu\cap K\neq\emptyset$, a contradiction to the assumption.
Therefore $K$ is s-saturated.
\end{proof}

\begin{lemma}\label{end}
The leaves of $K$ are $C^1$ immersions.
\end{lemma}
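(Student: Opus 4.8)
The plan is to promote the $C^0$ lamination of Lemma \ref{c0} to a $C^1$ one by showing each leaf is, locally, the union of the local strong stable and strong unstable leaves of its points, and then applying a joint–regularity argument. First I would record that every leaf $L$ of $\mathcal{L}$ is \emph{su-saturated}: since $K$ is u-saturated by hypothesis and s-saturated by the preceding lemma, and the leaves of $\mathcal{L}$ are, inside a lamination box, exactly the connected components of $K$, every local leaf $\mathcal{W}^{uu}_{loc}(q)$ or $\mathcal{W}^{ss}_{loc}(q)$ that meets $L$ is contained in $L$. Intersecting with a center-stable plaque, s-saturation together with the preceding lemma (which gives tangency of $K\cap\Wcs$ to $E^s$, i.e. $Dg(0)=0$) shows that $L\cap\Wcs_{loc}(p)$ coincides with the single strong stable plaque $\mathcal{W}^{ss}_{loc}(p)$, which is $C^1$; analogously, u-saturation forces $L\cap\Wcu_{loc}(p)=\mathcal{W}^{uu}_{loc}(p)$, again $C^1$. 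Hence near each $p\in L$ one has $L=\bigcup_{q\in\mathcal{W}^{uu}_{loc}(p)}\mathcal{W}^{ss}_{loc}(q)$, so $L$ carries two transverse foliations whose leaves are restrictions of the $C^1$ foliations $\mathcal{W}^{ss}$ and $\mathcal{W}^{uu}$.

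Next I would exhibit $L$ as a graph. Choosing coordinates near $p$ identifying a neighborhood with $\mathbb{R}^s\times\mathbb{R}^u\times\mathbb{R}$, the three factors being tangent at $p$ to $E^s(p)$, $E^u(p)$ and $E^c(p)$, the set $L$ is transverse to $\Wc$ and of dimension $d-1$ (the center being one-dimensional), so by Lemma \ref{c0} it is the graph of a continuous map $\omega$ from a disk in $\mathbb{R}^s\times\mathbb{R}^u$ to the center coordinate. The two foliations of $L$ project to two transverse continuous foliations of this disk whose leaves are $C^1$, being projections of the $C^1$ strong stable and strong unstable plaques. Along the projected stable leaves $\omega$ restricts to a $C^1$ map (its graph is a piece of a $\mathcal{W}^{ss}$-leaf), and likewise along the projected unstable leaves; moreover these leafwise derivatives are continuous and uniformly bounded, since on the compact manifold $M$ the strong stable and strong unstable leaves have uniformly $C^1$ geometry.

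Finally I would invoke Journé's lemma: a continuous function that is uniformly $C^1$ along the leaves of each of two transverse continuous foliations with uniformly $C^1$ leaves is itself $C^1$. Applied to $\omega$, this yields $\omega\in C^1$, so each local piece of $L$ is an embedded $C^1$ graph and $L$ is a $C^1$ immersed submanifold; its tangent space is spanned by the two leafwise directions, namely $T L=E^s\oplus E^u$, and these tangent planes vary continuously across lamination boxes, giving the asserted $C^1$ lamination. The hard part is exactly this last step: the foliations $\mathcal{W}^{ss}$ and $\mathcal{W}^{uu}$ are in general only transversally H\"older, so the leaf-to-leaf variation cannot be differentiated directly, and passing from separate leafwise $C^1$ regularity to joint $C^1$ regularity is precisely what Journé's lemma supplies; the care needed there is to verify the uniform $C^1$ bounds on the plaques and the transversality of the two projected foliations, so that the hypotheses of that lemma genuinely hold.
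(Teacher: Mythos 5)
Your argument reaches the right conclusion but by a genuinely different route from the paper's. The paper disposes of this lemma in one line: it quotes the appendix of \cite{Ha}, which asserts that the accessibility set $\mathcal{W}^{ss}(\mathcal{W}^{uu}(x))$ is differentiable \emph{at the base point $x$} with tangent plane $E^s(x)\oplus E^u(x)$; since each leaf of $\mathcal{L}$ is both s- and u-saturated, it contains such a set as a neighborhood of each of its points $x$ (by dimension and invariance of domain), hence is differentiable at every point, and the continuity of $E^s\oplus E^u$ upgrades pointwise differentiability to $C^1$ --- that last upgrade is exactly the final sentence of the proof of Proposition \ref{proposition b}. You instead establish the local product structure $L=\bigcup_{q\in\mathcal{W}^{uu}_{loc}(p)}\mathcal{W}^{ss}_{loc}(q)$, write $L$ as a graph $\omega$ over $E^s(p)\oplus E^u(p)$, and invoke Journ\'e's lemma for the two projected foliations; this is a legitimate, self-contained alternative and in fact yields a bit more ($C^{1,\alpha}$ leaves). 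One point needs care: Journ\'e's lemma is not the statement you quote. It requires uniform $C^{n,\alpha}$ regularity along the leaves with $\alpha>0$ (his proof proceeds by polynomial approximation and genuinely uses the H\"older modulus); the plain ``leafwise $C^1$ implies $C^1$'' version should not be attributed to it and is not available when the foliations are only transversally continuous. The gap closes immediately here: $f\in\mathrm{Diff}^2(M)$, so the strong stable and strong unstable plaques are uniformly $C^2$ and $\omega$ is uniformly $C^{1,\alpha}$ along both projected foliations for any $\alpha<1$, so the genuine Journ\'e lemma applies. With that correction, and with the verification (which you do sketch) that the projected plaque families really foliate the whole base disk transversally, your proof is complete.
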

\begin{proof}
By ~\cite[Appendix]{Ha}, $\mathcal{W}^{ss}(\mathcal{W}^{uu}(x))$ is once differentiable at $x$. Being both u-saturated and s-saturated,  every leaf of $K$ is $C^1$.
\end{proof}

\begin{proof}[Proof of Proposition \ref{proposition b}]
Let $k$ be as in Lemma \ref{const}. Combining  Lemma \ref{const} and Lemma \ref{end}, $K$ has a $C^0$ lamination $\mathcal{L}$ with $C^1$ leaves. The lamination being tangent to $E^s\oplus E^u$ which is continuous, is $C^1$. The proof of Proposition \ref{proposition b} is finished.
\end{proof}

\section*{Acknowledgement}
	
We would like to express our deep gratitude to Professor Raul Ures for posing the problem addressed in this paper and for for sharing his ideas generously.

Z. Zheng acknowledges support from NNSFC($\sharp11071238$), the Key Lab. of Random Complex Structures and Data Science CAS, and National Center for Mathematics and Interdisplinary Sciences CAS.

% You may incorporate your references as follows in your main tex file.
% Using BibTex is not recommended but can be handled.

\end{document}